\documentclass[a4paper]{article}
\usepackage{amsthm,amssymb,amsmath,enumerate,graphicx,epsf}
\usepackage{bbold}
\usepackage{mathtools,enumitem}
\usepackage{authblk}
\usepackage[percent]{overpic}
\usepackage{graphicx}
\RequirePackage{latexsym} \RequirePackage{amsthm}
\RequirePackage{amsmath}
\usepackage{authblk}
\usepackage{tikz}
\usepackage[dvipsnames]{xcolor}
\usepackage{subcaption}
\usepackage{geometry}
\usepackage{enumerate}
\RequirePackage{amssymb} \RequirePackage{makeidx}
\usepackage{dsfont}
\usepackage{mathrsfs}

\newcommand{\COLORON}{1}
\newcommand{\NOTESON}{0}
\newcommand{\Debug}{0} %CHANGE TO 0 TO REMOVE ALL COMMENTS ETC.

\usepackage{amsthm,amssymb,amsmath,bbm,enumerate,graphicx,epsf,stmaryrd,accents}

\usepackage{authblk}

\newcommand{\comment}[1]{}
\newcommand{\COMMENT}[1]{}

\colorlet{darkishRed}{red!60!black}
\definecolor{darkgray}{rgb}{0.3,0.3,0.3}
\definecolor{lgreen}{rgb}{0.065, 0.225, 0.065}
\newcommand{\defi}[1]{{\color{darkishRed}{\emph{#1}}}}
\newcommand{\defim}[1]{{\color{darkishRed}{#1}}}

\comment{
	\begin{lemma}\label{}	
\end{lemma}
\begin{proof}

\end{proof}

\begin{theorem}\label{}
\end{theorem} 
\begin{proof} 	

\end{proof}

}

\newcommand{\FIG}{0}

\ifnum \NOTESON = 1 \newcommand{\note}[1]{ 

\hspace*{-30pt}
	{\color{blue}  NOTE: \color{Turquoise}{\small  \tt \begin{minipage}[c]{1.1\textwidth}  #1 \end{minipage} \ignorespacesafterend }} 
	
	}
\else \newcommand{\note}[1]{} \fi

\newcommand{\afsubm}[1]{ \ifnum \Debug = 1 {\mymargin{#1}}
\fi} %For notes on after-submission changes

\ifnum \Debug = 1 
\else  \fi

\ifnum \FIG = 1 
\else  \fi

\ifnum \FIG = 1 
\else  \fi

\ifnum \Debug = 1 \usepackage[notref,notcite]{showkeys}
\fi

\ifnum \COLORON = 0 \renewcommand{\color}[1]{}
\fi

\newcommand{\N}{\ensuremath{\mathbb N}}
\newcommand{\R}{\ensuremath{\mathbb R}}

\newcommand{\cp}{\ensuremath{\mathcal P}}

\newcommand{\cv}{\ensuremath{\mathcal V}}
\newcommand{\cx}{\ensuremath{\mathcal X}}

\makeatletter
\DeclareRobustCommand{\cev}[1]{%
  \mathpalette\do@cev{#1}%
}
\newcommand{\do@cev}[2]{%
  \fix@cev{#1}{+}%
  \reflectbox{$\m@th#1\vec{\reflectbox{$\fix@cev{#1}{-}\m@th#1#2\fix@cev{#1}{+}$}}$}%
  \fix@cev{#1}{-}%
}
\newcommand{\fix@cev}[2]{%
  \ifx#1\displaystyle
    \mkern#23mu
  \else
    \ifx#1\textstyle
      \mkern#23mu
    \else
      \ifx#1\scriptstyle
        \mkern#22mu
      \else
        \mkern#22mu
      \fi
    \fi
  \fi
}

\makeatother
\newcommand{\nin}{\ensuremath{{n\in\N}}}

\newcommand{\pth}[2]{\ensuremath{#1}\text{--}\ensuremath{#2}~path}

\newcommand{\g}{\ensuremath{G\ }}
\newcommand{\G}{\ensuremath{G}}

\newcommand{\fe}{for every}

\newcommand{\st}{such that}

\newcommand{\wrt}{with respect to}

\newcommand{\labtequ}[2]{%\labtequc{#1}{#2}}
 \begin{equation} \label{#1} 	\begin{minipage}[c]{0.9\textwidth}  #2 \end{minipage} \ignorespacesafterend \end{equation} }

\newcommand{\mymargin}[1]{% <- dieses % verhindert ein ungewolltes Leerzeichen
 \ifnum \Debug = 1
  \marginpar{%
    \begin{minipage}{\marginparwidth}\small%
      \begin{flushleft}%
        {\color{blue}#1}%
      \end{flushleft}%
   \end{minipage}%
  }%
 \fi
}%

\newcommand{\extras}[1]{% <- dieses % verhindert ein ungewolltes Leerzeichen
 \ifnum \Debug = 1
\section{Extras} #1
 \fi
}%

\newcommand{\mySection}[2]{}

\usepackage{thmtools, mathtools}
\colorlet{darkishGreen}{green!50!black}
\colorlet{darkishBlue}{blue!60!black}
\usepackage[colorlinks=true, citecolor=darkishGreen, linkcolor=darkishBlue, urlcolor=darkishBlue, bookmarks,hyperfootnotes=false, psdextra=true,hypertexnames=false]{hyperref}
\usepackage[nameinlink, capitalise, noabbrev]{cleveref}
\usepackage{nameref}

\newtheorem{proposition}{Proposition}[section]

\newtheorem{theorem}[proposition]{Theorem}

\newtheorem{lemma}[proposition]{Lemma}

\newtheorem{problem}[proposition]{{Problem}}

\newtheorem{examp}[proposition]{Example}%[section]

\crefname{lemma}{Lemma}{Lemmas}

\theoremstyle{definition}
\newtheorem{construction}[proposition]{Construction}

\newcommand{\diam}{\mathrm{diam}}

\newcommand{\TD}{tree-decomposition}
\newcommand{\Id}{\ensuremath{\mathrm{Id}}}

\newcommand{\ncm}{near-component}

\def\td{tree-decom\-posi\-tion}

\newsavebox{\otterbox}
\sbox{\otterbox}{\includegraphics[height=1em]{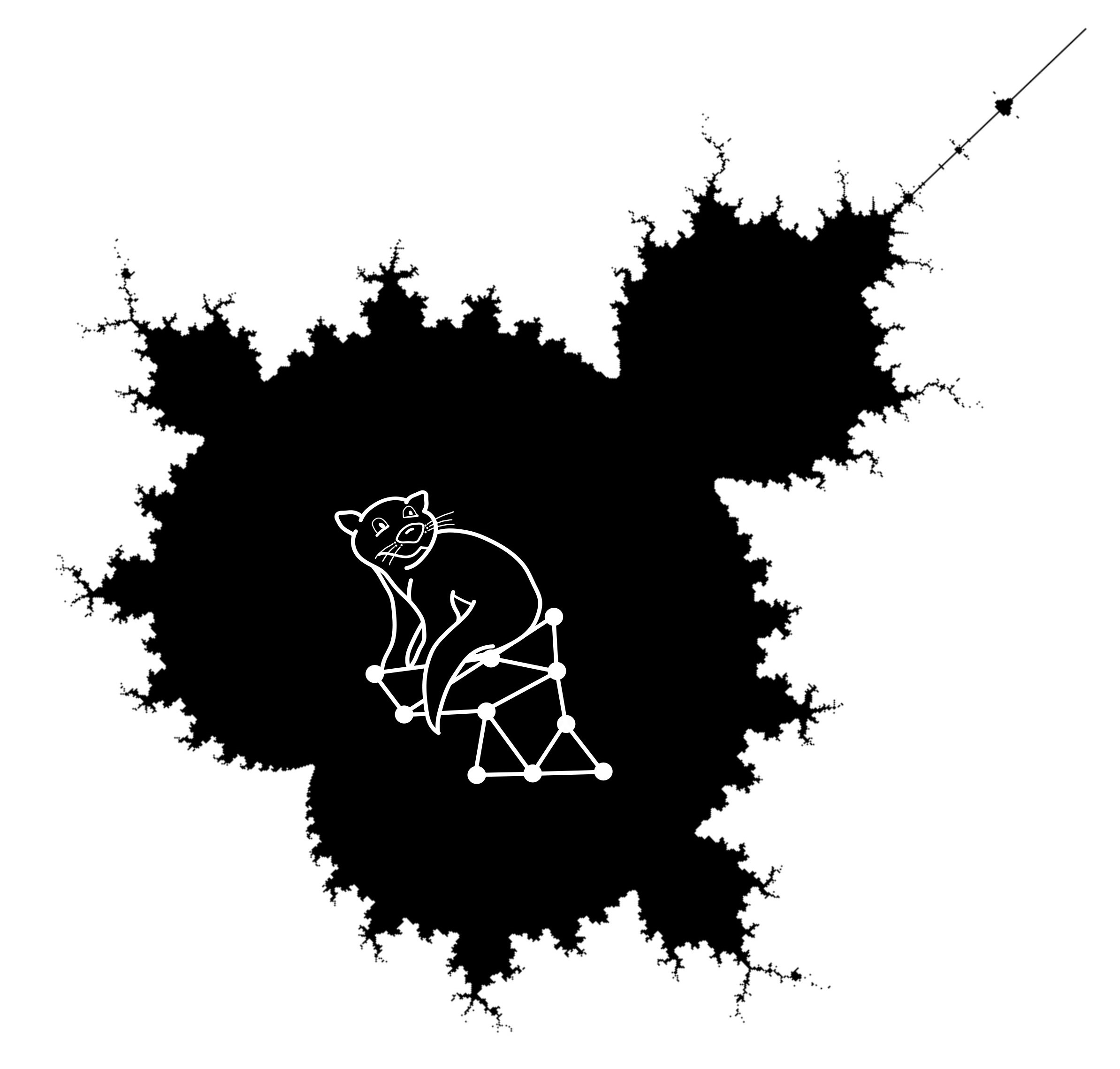}}

\makeatletter

\renewcommand{\@fnsymbol}[1]{%
  \ifcase#1\or
    \raisebox{-0.3ex}{\usebox{\otterbox}}%
  \or
    \dagger
  \or
    \ddagger
  \or
    \S
  \or
    \P
  \else
    *
  \fi
}

\makeatother

\begin{document}

\title{A coarse block-cutvertex tree-decomposition}

\author[1]{Sandra Albrechtsen\thanks{Supported by the Alexander von Humboldt Foundation in the framework of the Alexander von Humboldt Professorship of Daniel Král' endowed by the Federal Ministry of Education and Research.}}

\affil[1]{{Institute of Mathematics}\\ {Leipzig University}\\ {Augustusplatz 10}\\ {04109 Leipzig}\\ {Germany}}

\author[2]{Agelos Georgakopoulos}

\affil[2]{{Mathematics Institute}\\ {University of Warwick}\\  {CV4 7AL, UK}}

\date{}
\maketitle

\begin{abstract}
    We obtain a coarse version of the block-cutvertex tree-decomposition of a\\ connected graph.
\end{abstract}

{\bf{Keywords:} } block-cutvertex tree, coarse separator, tree-decomposition, coarse graph theory. 

{\bf{MSC 2020 Classification:}} 05C12, 05C40, 05C63, 51F30. 
\maketitle

\section{Introduction}

This paper provides a coarse version of the fundamental fact that every connected graph admits a block-cutvertex tree-decomposition. We thereby follow the recent trend of coarse graph theory, which seeks to translate the basic notions and theorems of graph theory into a metric-space language. Motivation comes from  making graph-theoretic results  robust under local perturbations —specifically, invariant under quasi-isometries— and also making them applicable to metric spaces. See \cite{ADGSmallCounterexamples,AJKW,BLPPSep,DGHLMGallai,FujPapCoa,LiuWeakCoarseMenger,McMFat,NgScSeCoa,NgScSeAsyIV} for a selection of results, and \cite{GeoPapMin} for a discussion of the motivation.  
\medskip

The natural coarse analogue of a cutvertex is a separator of bounded diameter. But the right definition of a `coarse block' is less clear: we cannot just take it to be a subgraph $H$ of a graph \g that cannot be separated by a set of bounded diameter, since the neighbourhood $N(x)$ of any vertex $x$ of $H$ has diameter at most 2 and separates $x$ from the rest of $H$. But there is a simple way to overcome this difficulty, namely to only consider separators ---of diameter at most some $d\in \N$--- that separate two or more components of diameter at least some parameter $D$ depending on $d$. More precisely, we say that 
a vertex set $U \subseteq V(G)$ is \defi{$(d,D)$-inseparable in $G$} if for every set $S \subseteq V(G)$ of diameter at most~$d$ in~$G$ and every two vertices $u,v \in U$, either one of $u,v$ has distance at most~$D$ from $S$ in $G$, or $u,v$ lie in the same component of $G-S$. We can now state our main result:

\begin{theorem} \label{main:CoarseBlockCutTree1}
    Let $d \in \N$. Every graph $G$ admits a tree-decomposition \st: 
    \begin{enumerate}[label=\rm{(\roman*)}]
        \item \label{itm:CoarseBlockCutTree1:i} every adhesion set has diameter at most $5d+2$ in $G$, and
        \item \label{itm:CoarseBlockButTree1:ii} every bag is $(d,2d+1)$-inseparable in $G$.
    \end{enumerate}
\end{theorem}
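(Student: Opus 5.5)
We prove \cref{main:CoarseBlockCutTree1} by a refinement procedure modelled on the proof that the block-cutvertex decomposition exists. We call a separation $(A,B)$ of $G$ \emph{$d$-relevant} if its separator $A\cap B$ is contained in a set $S$ of diameter at most~$d$, and there are vertices $u\in A\setminus B$, $v\in B\setminus A$ with $d(u,S),d(v,S)>2d+1$. A vertex set $U$ is $(d,2d+1)$-inseparable exactly when no such separation puts far-away vertices of $U$ on strictly opposite sides. We may assume $G$ is connected, since otherwise we combine decompositions of the components along empty adhesion sets. If $G$ is finite, we proceed by induction on $|V(G)|$. For infinite $G$ we would instead build a nested set of separations and take the associated tree, accepting some extra technical work.

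\textbf{Key step: a nested family of coarse separations.} Let $S$ have diameter at most $d$, and let $C$ be a component of $G-S$ containing a vertex at distance $>2d+1$ from $S$. We do not separate along $S$ itself. Instead we thicken it: let $X := B_{2d+1}(S)$, the ball of radius $2d+1$ around $S$, or a slightly smaller ball. We then take the separation $(C\cup N(C),\, V\setminus C)$ with separator $N(C)\subseteq S$. The point of the radius $2d+1$ is the following claim. If $S,S'$ are two diameter-$\le d$ sets that both properly separate deep vertices, and their separators cross, then $S$ and $S'$ must lie close together, within distance about $2d+1$ of each other. Their union then has diameter at most about $d+(2d+1)+d+\dots\le 5d+2$. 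We therefore aim for this crossing lemma. Suppose $S'$ meets two components $C_1,C_2$ of $G-S$ that both have deep vertices, and $S$ meets two deep sides of $S'$. Then distances show $S\cup S'$ lies within a ball of bounded radius. We then uncross by replacing crossing separations with corner separations whose separators lie in $S\cup S'$ or in a union of such sets, with diameter $\le 5d+2$. This is exactly how the constant $5d+2$ should arise: $d+d$ for the two sets, plus twice $2d+1$ for the connecting distance.

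\textbf{Assembling.} Choose a maximal nested set $N$ of such (possibly merged) separations, each with separator of diameter $\le 5d+2$. Nested separations of a finite connected graph yield a tree-decomposition whose adhesion sets are the separators, which gives (i). For (ii), suppose a bag $\beta$ contains $u,v$ that are far from some diameter-$\le d$ set $S$ and lie in different components of $G-S$. Then the separation induced by $S$ separates $u,v$. By maximality it must cross some separation of $N$, and by the uncrossing step a corner of it is nested with $N$ and still separates $u$ from $v$. This contradicts $u,v$ lying in a common bag.

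\textbf{Main obstacle.} The crux is the uncrossing step. We must show that when two relevant separations cross, some corner separation both has a separator of diameter $\le 5d+2$ and still separates the relevant far vertices. This needs careful distance bookkeeping: vertices at distance $>2d+1$ from $S$ must stay far from the corner separator, which is why the inseparability radius $2d+1$ is tied to $d$. I would first attempt it by choosing, among separations distinguishing a given far pair, one whose deep side is inclusion-minimal. I would then argue that minimal ones are automatically nested, in the style of tangle-distinguishing arguments.
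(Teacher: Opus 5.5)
There is a genuine gap, and it is the step you flag as the main obstacle: the uncrossing is never carried out, and the mechanism you propose for it does not transfer to this setting.

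\textbf{What works and what fails.} The one-step estimate is fine. If the separations coming from $S$ and $S'$ cross, then the separators $N(C)\subseteq S$ and $N(C')\subseteq S'$ meet, or one has vertices on both sides of the other. Hence $d_G(S,S')\le\lfloor d/2\rfloor$, and every corner separator lies in $S\cup S'$, of diameter at most $2d+\lfloor d/2\rfloor$. But such a corner is only guaranteed to be nested with the member of $N$ you uncrossed with, and with members nested with both; it may still cross further members of $N$.

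Tangle- or $k$-block-style arguments (take a separation with inclusion-minimal side and conclude it is nested with everything) rely on submodularity of $|A\cap B|$. If two crossing separations have order at most $k$, then so does one of any two opposite corners. That corner is therefore a competitor with a smaller side, which contradicts minimality. The diameter of a separator has no such property. A corner of two diameter-$d$ separations already has diameter up to $2d+\lfloor d/2\rfloor$, so it is not a competitor. Uncrossing it again puts its separator inside a union of three or more sets, some possibly of diameter $5d+2$, and nothing bounds the result. So no admissible class is both closed under your operations and confined to diameter $5d+2$. The assembly claim for (ii) is therefore unsupported: you need a corner nested with \emph{all} of $N$ that still strictly separates $u$ from $v$, with $u,v$ far from it.

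\textbf{Further problems.} Every adhesion set lies in two bags, so (ii) forces each separator in $N$ to be $(d,2d+1)$-inseparable itself. This is automatic only up to diameter $4d+3$: if $S''$ separates $x,y$, then $d_G(x,S'')+d_G(y,S'')\le d_G(x,y)$. It fails for general sets of diameter $5d+2$. For $d\ge 2$, two vertices at distance $5d+2$ on a path are separated by a single vertex at distance $2d+2$ from one and $3d$ from the other. Also, your count $d+d+2(2d+1)$ equals $6d+2$, not $5d+2$. Finally, for infinite $G$ a nested separation system need not induce a tree-decomposition, and you defer this point.

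\textbf{The missing idea.} The paper breaks the symmetry with a root, which makes uncrossing unnecessary. It fixes $o$ and cuts $G$ into the $d$-wide annuli $A_n$. For each component $C$ of $G-\bigcup_{i<n}A_i$, the $2(d+1)$-near-components of $A_n\cap C$ become boxes, which are contracted to form $H$.

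\begin{itemize}
\item The heart is \cref{lem:blocks}: two vertices in boxes of a common block of $H$ are $(d,2d+1)$-inseparable. Each can be routed to the other or to $o$ while avoiding $S$, because $S$ meets at most two consecutive annuli and distinct boxes in a common component of $G-\bigcup_{i<n}A_i$ are at distance at least $2d+2$.
\item Nestedness then comes for free. The tree is the block-cutvertex tree of $H$ with each edge $Bh$ contracted when $E_{Bh}$ is wide.
\item Adhesion sets are small sets $E_{Bh}$ inside a single box, hence inseparable.
\item The constant satisfies $5d+2=d+2(2d+1)\ge\diam_G B_G(S,2d+1)$. So every wide $E_{Bh}$ contains a vertex at distance at least $2d+2$ from $S$, which lets \cref{lem:block-classes} chain across identified blocks.
\end{itemize}

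None of this uses finiteness of $G$.
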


The classical block-cutvertex tree of a graph $G$ is straightforward to construct, as the decomposition is unique, being determined by the blocks of \G. In contrast, \cref{main:CoarseBlockCutTree1} is more difficult. Apart from the fact that one needs to come up with the right notion of `coarse block' as discussed above, the separators and coarse blocks can overlap in complex ways, making the construction non-canonical, and the interplay between the parameters $d$ and $D$ needs to be controlled. 

We remark that by the coarse Menger Theorem for two paths~\cite{AHJKW,GeoPapMin}, property~\ref{itm:CoarseBlockButTree1:ii} implies that for every two sets $A,B$ of vertices of $G$ that are contained in a common bag and that have diameter at least $5d+3$ in $G$, there are two $A$--$B$ paths in $G$ that are at distance at least~$d/129$ from each other. Therefore, one can think of the bags as being `coarsely $2$-path-connected'.

A  result similar to \cref{main:CoarseBlockCutTree1} has been obtained independently by Balig\'acs, Bla{\v{z}}ej, Czy{\.{z}}ewska, Pilipczuk and Protopapas \cite{BBCPPCoarseBlockCutTree}. We remark that their constants are slightly better than ours.

\medskip
A perhaps unsatisfactory feature of \cref{main:CoarseBlockCutTree1} is that inseperability of the bags in \ref{itm:CoarseBlockButTree1:ii} is defined \wrt\ the metric of the ambient graph \G, rather than being a property of the bag itself. However, by modifying our tree-decomposition slighlty, namely by enlarging its bags up to a bounded-distance neighbourhood, we will obtain the following variant, whereby the intrinsic metric of each bag is very similar to that it inherits from \G, and each bag ---or rather torso--- is `coarsely inseperable' as a graph. To make this more precise, we define the \defi{outer-torso} \defi{$O_t$} of a bag $V_t$ to be the graph obtained from $G$ by contracting each component of $G-V_t$ into a vertex. We will prove

\begin{theorem} \label{thm:cbct}
    Let $d \in \N$. Every graph $G$ admits a tree-decomposition $(T, \cv)$ \st: 
    \begin{enumerate}[label=\rm{(\roman*)}]
        \item \label{itm:TD i} every adhesion set has intrinsic diameter at most~$15d+8$, 
        \item \label{itm:TD iii} every outer-torso is a $(d,7d+4)$-inseparable graph\footnote{A graph $G=(V,E)$ is \defi{$(d,D)$-inseparable}, if $V$ is $(d,D)$-inseparable according to the above definition.}, and 
        \item \label{itm:TD ii} for every bag $V_t$, the identity $\Id: G[V_t] \to G$ has additive distortion at most $20d+12$.
    \end{enumerate}
\end{theorem}

Roughly speaking, the block-cutvertex tree decomposes a connected graph along its cutvertices into its $2$-connected pieces.
Following this idea, the Tutte \td~\cite{Tutte} decomposes a $2$-connected graph $G$ along adhesion sets of size~$2$ into torsos that are $3$-connected, a cycle, or a~$K_2$. An interesting question would be whether there is a coarse version of this decomposition.

\begin{problem}
    Find a coarse version of the Tutte decomposition.    
\end{problem}

We intentionally leave the problem somewhat vague. Ideally, we would like to start with a `coarsely $2$-connected' graph~$G$ (i.e.\ $(d,D)$-inseparable for some (large) parameters $d,D \in \N$) and then find a \td\ of~$G$ whose adhesion sets can be covered by two sets of bounded diameter, and whose bags are `coarsely $3$-connected' in the following sense: For any two vertices $u,v$ that lie in a common bag, and for every set $S \subseteq V(G)$ that can be covered by two sets of diameter at most~$d'$, either one of $u,v$ is at distance at most~$D'$ from~$U$ or $u,v$ lie in the same component of~$G-S$.

\section{Preliminaries}

A vertex $v$ of a graph $G$ is a \defi{cutvertex}, if $G-v$ has more components than $G$. 
A \defi{block} of $G$ is a maximal connected subgraph that does not have a cutvertex.

The \defi{block-cutvertex tree} of $G$ is the tree $T$ such that $V(T)$ is the set of blocks and cutvertices of $G$, and a block sends an edge in $E(T)$ to each cutvertex it contains. 

\subsection{Distances, diameter, and balls}

Let $G$ be a graph.
We write~\defi{$d_G(v, u)$} for the distance of the two vertices~$v$ and~$u$ in~$G$. 
For two sets~$U$ and~$U'$ of vertices of~$G$, we write~\defi{$d_G(U, U')$} for the minimum distance of two elements of~$U$ and~$U'$, respectively.

Let $U$ be a set of vertices of~$G$.
The \defi{diameter}~\defi{$\diam_G(U)$} of $U$ in~$G$ is the smallest number~$d \in \N \cup \{\infty\}$ such that $d_G(u,v) \leq d$ for all $u,v \in U$. The \defi{intrinsic} diameter of~$U$ is the smallest $d \in \N \cup \{\infty\}$ such that $d_{G[U]}(u,v) \leq d$ for all $u,v \in U$.

The \defi{ball (in~$G$) around~$U$ of radius $r \in \N$}, denoted by~\defi{$B_G(U, r)$}, is the set of all vertices in~$G$ at distance at most~$r$ from~$U$ in~$G$.

Let $G,H$ be graphs. A map $\varphi: V(G) \to V(H)$ has \defi{additive distortion} at most~$D \in \N$ if $d_G(u,v) - D \leq d_H(\varphi(u),\varphi(v)) \leq d_G(u,v)+D$.

\subsection{Tree-decompositions}
\label{sec TD}

A \defi{tree-decomposition} of a graph~$G$ is a pair $\mathcal{T} = (T, (V_t : t \in V(T)))$, where $T$ is a tree, and $V_t$ is a subset of $V(G)$ for each $t \in V(T)$ such that:
\begin{enumerate}[label=\rm{(T\arabic*)}]
    \item \label{TDi} $V(G) = \bigcup_{t \in V(T)} V_t$;
    \item \label{TDii}  for every edge $e = uv$ of $G$, there exists $t \in V(T)$ with $u, v \in V_t$; and
    \item \label{TDiii} for all $t_1, t_2, t_3 \in V(T)$, if $t_2$ lies on the path of $T$ between $t_1, t_3$, then $V_{t_1} \cap V_{t_3} \subseteq V_{t_2}$.
\end{enumerate}
The sets $V_t$ are the \defi{bags} of $\mathcal{T}$, and $T$ is its \defi{decomposition tree}. The sets $\defim{V_e} := V_s \cap V_t$ for edges $e=st$ of~$T$ are the \defi{adhesion sets} of~$\mathcal{T}$. 
\medskip

Given $X\subset V(G)$ and $r\in \R_+$, let $\defim{X^{+r}} := B_G(X,r)$.
\begin{lemma} \label{lem:TD exp}
    Let $(T, (V_t : t \in V(T)))$ be a tree-decomposition of a graph~$G$, and $R\in \R_+$. Then $(T, (V_t^{+R} : t \in V(T)))$ is also a tree-decomposition of~$G$. 

    If for each adhesion set $V_e$ we have $\diam_G(V_e) < r \in \N$, then for every $R \geq \lfloor r/2\rfloor$ and $t \in V(T)$ the identity $\Id: G\left[V_t^{R}\right] \to G$ has additive distortion at most $4R$.
    
    Moreover, there exists a surjective map from the outer-torso $O^{+R}_t$ of $V^{+R}_t$ to $G\left[V^{+R}_t\right]$ of additive distortion at most~$4R$.\footnote{A very similar lemma appears in \cite{GeoMolQua}.}
\end{lemma}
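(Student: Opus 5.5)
The plan is to treat the three assertions separately. Each rests on one standard fact about a tree-decomposition $(T,\cv)$. If $C$ is a component of $G-V_t$, then all vertices of $C$ lie in bags of a single component $T'$ of $T-t$. Consequently every vertex of $V_t$ with a neighbour in $C$ lies in the adhesion set $V_e$, where $e$ is the edge joining $t$ to $T'$. This follows from \ref{TDii} and \ref{TDiii}.

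\emph{Step 1: the expanded bags form a tree-decomposition.} Properties \ref{TDi} and \ref{TDii} are inherited, since $V_t\subseteq V_t^{+R}$. For \ref{TDiii}, let $v\in V_{t_1}^{+R}\cap V_{t_3}^{+R}$ and let $t_2$ lie on the $t_1$--$t_3$ path in $T$. Pick $u_i\in V_{t_i}$ with $d_G(v,u_i)\le R$, and let $W$ be the walk $u_1\to v\to u_3$ formed by two geodesics. For each vertex $x$, the set $T_x=\{s: x\in V_s\}$ is a subtree of $T$. Along an edge of $W$ the two endpoints share a bag, so consecutive subtrees intersect. Hence the union of the $T_x$ over $x\in W$ is a connected subtree containing $t_1$ and $t_3$, and therefore $t_2$. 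So some $w\in W$ lies in $V_{t_2}$. Every vertex of either geodesic is within $R$ of $v$, so $v\in V_{t_2}^{+R}$.

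\emph{Step 2: additive distortion of $\Id: G[V_t^{+R}]\to G$.} The lower bound $d_{G[V_t^{+R}]}\ge d_G$ is trivial. For the upper bound, take $u,v\in V_t^{+R}$ and nearest points $u',v'\in V_t$. Take a $u'$--$v'$ geodesic $Q$ in $G$ and look at each maximal subpath of $Q$ whose interior avoids $V_t$. Its endpoints $x,y$ lie in $V_t$ and are adjacent to a common component of $G-V_t$. By the standard fact they lie in one adhesion set, so $d_G(x,y)\le r-1$. Replace the subpath by an $x$--$y$ geodesic; it is no longer than the subpath. Each vertex of this geodesic is within $\lfloor (r-1)/2\rfloor\le R$ of $x$ or of $y$, so it lies in $V_t^{+R}$. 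The geodesics from $u$ to $u'$ and from $v'$ to $v$ also stay inside the ball. Altogether
\[d_{G[V_t^{+R}]}(u,v)\le d_G(u',v')+2R\le d_G(u,v)+4R.\]

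\emph{Step 3: the outer-torso.} Define the map to be the identity on $V_t^{+R}$, and send each contracted vertex $c_C$ to some neighbour of $C$ in $V_t^{+R}$. Such a neighbour exists on each component of $G$ meeting $V_t$; components of $G$ disjoint from $V_t$ need a separate trivial treatment. The map is surjective. Since contraction only shortens distances, $d_O\le d_G\le d_{G[V_t^{+R}]}$. The reverse inequality is the main obstacle.

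For the reverse inequality I would show $d_G(u',v')\le d_O(u',v')$ for $u',v'\in V_t$, by the same segment argument as in Step 2, applied now to an $O$-geodesic. A maximal segment outside $V_t$ stays inside one component of $G-V_t$, because components of $G-V_t^{+R}$ are contained in components of $G-V_t$. So its endpoints again satisfy $d_G(x,y)\le r-1$. Segments avoiding contracted vertices are genuine $G$-paths. A segment through some $c_C$ must travel from $x$ out to distance exactly $R$ from $V_t$ and back again, so it has length at least $2R+2> r-1$. Hence every segment can be replaced at no cost. This gives
\[d_{G[V_t^{+R}]}(u,v)\le d_G(u',v')+2R\le d_O(u',v')+2R\le d_O(u,v)+4R.\]
The remaining care is with pairs involving contracted vertices, which are shifted by one edge. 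I would check these pairs carefully, and absorb the shift into the distortion by comparing with nearest points of $V_t$ as above rather than with the images themselves.
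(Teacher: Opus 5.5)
Your Steps 1 and 2 are fine. Step 1 proves directly what the paper cites from \cite{ADEFJKW}, and Step 2 is the paper's argument. In Step 3 (writing $G':=G[V_t^{+R}]$, $O:=O^{+R}_t$, and $\varphi$ for your map), your segment argument for $d_G(u',v')\le d_O(u',v')$ on $V_t$ is more careful than the paper's ``by the same argument as above''. In particular, the observation that a segment through a contracted vertex has length at least $2R+2>r-1$ is exactly what is needed there.

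The gap is the case you defer: pairs involving contracted vertices. The fix you suggest does not give $4R$. A contracted vertex $p=c_C$ is adjacent only to vertices at distance exactly $R$ from $V_t$, so every vertex of $V_t$ is at $O$-distance at least $R+1$ from $p$. Comparing through nearest points $p^*,q^*\in V_t$ of $\varphi(p),\varphi(q)$ therefore only gives, for two contracted vertices,
\[d_{G'}(\varphi(p),\varphi(q))\le 2R+d_O(p^*,q^*)\le 2R+(R+1)+d_O(p,q)+(R+1)=d_O(p,q)+4R+2.\]
For the reason just given, no triangle inequality through points of $V_t$ can do better. The paper's ``similar as above'' has the same defect if carried out literally, so this is a gap you share with it.

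The missing idea is to use the adhesion hypothesis a second time, on the contracted side. Assume $R\ge 1$ and let $\gamma$ be an $O$-geodesic from $p$ to $q$.
\begin{itemize}
\item If $\gamma$ misses $V_t$, then $p,q$ lie over one component $D$ of $G-V_t$. Both $p^*,q^*$ lie in $N(D)$, since a geodesic from $\varphi(p)$ to its nearest point $p^*$ has its interior in $D$. Hence they lie in one adhesion set by your standard fact, and $d_{G'}(\varphi(p),\varphi(q))\le R+(r-1)+R\le 4R$ outright.
\item Otherwise let $x,y$ be the first and last vertices of $\gamma$ in $V_t$. If $p\notin V_t$, the part of $\gamma$ before $x$ lies over the component $D$ of $G-V_t$ containing $\varphi(p)$. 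The vertex preceding $x$ is not contracted, so $x\in N(D)$, and thus $d_G(p^*,x)\le r-1\le 2R$. By Step 2, $d_{G'}(\varphi(p),x)\le d_G(\varphi(p),V_t)+d_G(p^*,x)$. We have $d_O(p,x)\ge d_G(p,V_t)$ for $p\in V_t^{+R}$, and $d_O(p,x)\ge R+1$ for contracted $p$. Together these give $d_{G'}(\varphi(p),x)\le d_O(p,x)+2R$; the case $p=x$ is trivial.
\end{itemize}
Symmetrically, $d_{G'}(y,\varphi(q))\le d_O(y,q)+2R$. Your $V_t$-pair bound gives $d_{G'}(x,y)\le d_O(x,y)$. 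Summing along $\gamma$ yields the bound $d_O(p,q)+4R$.

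Finally, components of $G$ disjoint from $V_t$ do not admit a ``trivial treatment''. They become isolated vertices of $O$, so no map has finite additive distortion. You must assume $G$ connected, as the paper does in its application.
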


\begin{proof}
    The first part of the statement is \cite[Lemma~3.3]{ADEFJKW}.

    Let $u,v \in V^{+R}_t$. Set $\defim{G'} := G\left[V^{+R}_t\right]$. 
    Since $G' \subseteq G$, we have $d_G(u,v) \leq d_{G'}(u,v)$.
    
    If $u,v \in V_t$, then, since each adhesion set has diameter less than~$r$, every shortest $u$--$v$~path in~$G$ is contained in $G[B_G(V_t, \lfloor r/2\rfloor)] \subseteq G'$, and thus $d_{G'}(u,v) \leq d_G(u,v)$. 
    
    Otherwise, let $u',v' \in V_t$ such that $d_{G'}(u,u'), d_{G'}(v,v') \leq R$. Then 
    \begin{align*}
    d_{G'}(u,v) &\leq d_{G'}(u,u') + d_{G'}(u',v') + d_{G'}(v',v) \leq R + d_G(u',v') + R = d_G(u',v') + 2R \\
    &\leq (d_G(u',u) + d_G(u,v) + d_G(v,v')) + 2R \leq (R + d_G(u,v) + R) + 2R,
    \end{align*}
    and thus $d_{G'}(u,v) \leq d_G(u,v) + 4R$ as desired.
    \smallskip

    For the `moreover'-part, set $\defim{O'} := O^{+R}_t$ and define $\varphi: O' \to G'$ by $\varphi(v) = v$ for all $v \in V(G')$ and $\varphi(v) = v'$ for all $v \in V(O' - G')$ where $v' \in N_{O'}(v)$. In particular, $v' \in V(G')$. Since $G' \subseteq O'$, we have $d_{O'}(u,v) \leq d_{G'}(\varphi(u), \varphi(v))+2$. 

    By the same argument as above, we also have $d_{G'}(\varphi(u), \varphi(v)) \leq d_{O'}(u,v)$ for all $u,v \in V_t$. Otherwise, similar as above, $d_{G'}(\varphi(u), \varphi(v)) \leq d_{O'}(u,v) + 4R$.
\end{proof}

\section{The construction}

In this section we prove \cref{main:CoarseBlockCutTree1,thm:cbct}. We first prove \cref{main:CoarseBlockCutTree1}. We then take the \td\ that we constructed for \cref{main:CoarseBlockCutTree1}, expand its bags slightly, and then show that the arising \td\ is as desired for \cref{thm:cbct}.
\medskip

Let $G,H$ be graphs. A \defi{graph-partition of}~$G$ \defi{over}~$H$, or $H$-\defi{partition} for short, is a partition $\cx := (X_h \mid h \in V(H))$ of~$V(G)$ (into \defi{boxes}~$X_h$) indexed by the vertices of~$H$ such that for every edge $uv \in E(G)$, if $u \in X_{g}$ and $v \in X_h$, then $g = h$ or $gh \in E(H)$.
\medskip

Our construction of a coarse block-cutvertex \td\ of~$G$ for the proof of \cref{main:CoarseBlockCutTree1} is based on finding an appropriate $H$-{partition} of~$G$ so that $H$ can be thought of as a coarse-grained variant of~$G$, respecting its coarse-cutvertex structure: every box corresponding to a cutvertex of~$H$ will separate~$G$, and every block of~$H$ will induce a $(d,2d+1)$-inseparable subgraph of~$G$ (see \cref{lem:blocks}). However, there may be distinct blocks $B,B'$ of~$H$ whose union still induces a $(d,2d+1)$-inseparable subgraph of~$G$; this happens when the box of some cutvertex~$h$ of $H$ intersects the boxes of both~$B$ and~$B'$ in a subset of large diameter compared to~$d$. Thus, the tree~$T$ of \cref{main:CoarseBlockCutTree1} will be obtained from the block-cutvertex tree~$T'$ of~$H$ after contracting some edges of~$T'$ (incident with such cutvertices~$h$).
\medskip

We now start with the proof of \cref{main:CoarseBlockCutTree1}. For this, we may assume that $G$ is connected; for otherwise, we can obtain the desired \td\ of~$G$ by decomposing each of its components individually, and then gluing the \td\ together.

We begin by formally defining the aforementioned $H$-{partition}:

\begin{construction} \label{constr:HPartition}
    Pick some $\defim{o} \in V(G)$. For $\nin$, let $A_n$ be the $n$-th $d$-wide annulus of \G\ \wrt\ $o$, i.e.\ 
    \[
    \defim{A_n} := \{ v\in V(G) \mid d(o,v)\in [nd, (n+1)d)\}.
    \]

    A set $U \subseteq V(G)$ is \defi{$r$-near-connected}, for $r\in \N$, if for every $x,y\in U$, there is a sequence $x=x_0,x_1, \ldots, x_k=y$ of vertices in $U$ such that $d_G(x_i,x_{i+1})\leq r$ \fe\ $i<k$.
    An \defi{$r$-\ncm} of $U$ is a maximal subset of $U$ that is $r$-near-connected.

    For each $n \in \N$ and each component $C$ of $G- \bigcup_{i<n} A_i$, we declare each $2(d+1)$-\ncm\ of $A_n\cap C$ to be a \defi{box} of the partition. We obtain \defi{$H$} from \g by contracting each box into a vertex. Thus each vertex $h$ of $H$ corresponds to a unique box \defi{$X_h$}. Let $\defim{\cx}:= (X_h \mid h \in V(H))$.
    Since $G$ is connected, $\cx$ is a graph-partition of~$G$ over~$H$.
\end{construction}

(Such `layered' partitions are a common tool in coarse graph theory; see e.g.\ \cite{ADGK2t,CDNRV,FujPapAsy,GeoPapMin}. But considering each component of $G- \bigcup_{i<n} A_i$ separately is a subtle point of our construction, without which our proof fails. A similar idea has been used in \cite{ADGK2t}.)
\medskip

Given $v \in V(G)$, let \defi{$X(v)$} be the unique box $X_h \in \cx$ that contains $v$, and let \defi{$h(v)$} be the corresponding node of $H$, i.e.\ $X(v) = X_{h(v)}$.

\begin{lemma} \label{lem:blocks}
    Let $G$ be a graph, and let $H, \cx$ be given by \Cref{constr:HPartition}. For every two $X_g, X_h \in \cx$ such that either $g=h$ or $g,h$ lie in the same block of $H$, every two vertices $u \in X_g$ and $v \in X_h$ are $(d,2d+1)$-inseparable in $G$.
\end{lemma}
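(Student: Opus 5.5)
The plan is to reduce the statement to the $2$-connectivity of blocks of~$H$: deleting a single node from a block leaves it connected. Fix $S\subseteq V(G)$ with $\diam_G(S)\le d$, and $u\in X_g$, $v\in X_h$ with $d_G(u,S),d_G(v,S)>2d+1$. We want a $u$--$v$ path in $G-S$. I would show that the ``damage'' done by $S$ to the graph-partition $\cx$ is concentrated at (essentially) one node $h_0$ of~$H$. Every other box, and every edge of $H$ not incident with $h_0$, can then be traversed in $G-S$. A path from $g$ to $h$ in the block of $H$ minus $h_0$ then lifts to a $u$--$v$ path in $G-S$. When $g=h$ the path is inside the box itself, and the same lifting argument applies.

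\emph{Step 1: locating $S$.} Let $n$ be minimal with $S\cap A_n\ne\emptyset$. Since $\diam_G(S)\le d$, all of $S$ lies at distance in $[nd,nd+2d)$ from~$o$, so $S\subseteq A_n\cup A_{n+1}$. Any two vertices of $S$ in $A_n$ lying in the same component $C$ of $G-\bigcup_{i<n}A_i$ are at distance $\le d\le 2(d+1)$, so they lie in the same box. I would take $h_0$ to be the box at level~$n$ containing a vertex of $S$ closest to~$o$. I would then show that the remaining boxes met by $B_G(S,d+1)$ are controlled by $h_0$: they either lie in the subtree of components hanging below $h_0$, or can be bypassed through lower levels. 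The bypass works because a geodesic towards $o$ from level $n$ drops below distance $nd$ and so avoids $S$ once it is more than $d$ below $\min_{s\in S} d(o,s)$.

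\emph{Step 2: lifting.} Two consecutive vertices $x_i,x_{i+1}$ in a $2(d+1)$-near-connected box are joined by a geodesic of length $\le 2d+2$. Every vertex of this geodesic is within $d+1$ of $x_i$ or $x_{i+1}$, so it avoids $S$ whenever both endpoints are at distance $>d+1$ from~$S$. Similarly, an edge $xy$ of $G$ realising an edge of $H$ avoids $S$ if $x,y\notin S$. I then need: for every box $X_{h'}$ with $h'\ne h_0$, the vertices of $X_{h'}$ at distance $>d+1$ from $S$ stay near-connected, and every $H$-edge at $h'$ is realised by such a vertex. I would get this by rerouting near-connecting sequences around $S$. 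One option is to go down one level, which Step 1 makes available. The other is to use that $X_{h'}$ itself is disjoint from $S$ apart from the single level-$n$ box $h_0$ in each component, so that within $G[X_{h'}\cup(\text{paths})]$ the obstruction is only $B(S,d+1)\cap X_{h'}$. The slack $2d+1$ versus $d+1$ in the hypothesis is what lets $u$ and $v$ themselves attach to these good parts.

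\emph{Main obstacle.} The hard step is Step 1: showing that $S$ effectively destroys only one node of $H$, even though $S$ may meet several components $C$ of $G-\bigcup_{i<n}A_i$ and both annuli $A_n,A_{n+1}$. This is exactly where considering components separately in \cref{constr:HPartition} must be used. Vertices of $S$ in $A_{n+1}$ lie in components hanging below level-$n$ boxes met by~$S$. Vertices of $S$ in different level-$n$ components are pairwise close, which forces the corresponding level-$n$ boxes to be joined through lower levels away from~$S$. I would first try to prove that all boxes met by $S$ other than $h_0$ are separated from the rest of $H$ by $h_0$, or are redundant for connectivity. If that fails, I would weaken the target to the following: the set of boxes met by $S$ is contained in $\{h_0\}$ together with nodes whose removal does not disconnect the block beyond what removing $h_0$ does.
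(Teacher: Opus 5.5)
Your plan has a genuine gap, at the step you flagged yourself. The claim that $S$ essentially destroys only one node $h_0$ is false: it is not true that for $h'\neq h_0$ the set $X_{h'}\setminus B_G(S,d+1)$ stays $2(d+1)$-near-connected and $H$-paths avoiding $h_0$ lift to $G-S$. In fact the $u$--$v$ path may have to pass through $X_{h_0}$ itself.

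\emph{First example.} Take $d\ge 3$, $n\ge 2$, and a path $Y=y_{-L}\dots y_L$ with $L=2d+2$, each of whose vertices has a private path of length $(n+1)d$ to $o$. Let $S=\{y_0,\ell_0\}$, where $\ell_0$ is the neighbour of $y_0$ on its private path. Then:
\begin{enumerate}[label=\rm{(\arabic*)}]
\item $Y$ is a single box in $A_{n+1}$;
\item your $h_0$ is the level-$n$ box containing the tops of all private paths;
\item $Y\setminus B_G(S,d+1)$ splits into two pieces more than $2d+2$ apart;
\item for $u=y_{-L}$ and $v=y_L$ (so $g=h$), every $u$--$v$ path in $G-S$ goes through $X_{h_0}$.
\end{enumerate}
So neither a path ``inside the box'' nor a lift of any $H$-path avoiding $h_0$ exists.

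\emph{The case $h_0\in\{g,h\}$.} You never treat this case, where $2$-connectivity of the block says nothing. There the statement as written actually fails. Take $d=4$:
\begin{enumerate}[label=\rm{(\arabic*)}]
\item join $o$ to a vertex $c$ by a path of length $6$;
\item join $c$ to each of $s_1,s_2$ by a path of length $2$;
\item join $s_1$ to $s_2$ by a path $Q$ of length $80$;
\item attach to $s_1$ a path $s_1b_1\dots b_{12}$ in which each $b_k$ has a private path of length $8$ to $o$.
\end{enumerate}
Then $s_1,s_2,b_{12}$ lie in one level-$2$ box $X$, which is your $h_0$. The two halves of $Q$ form distinct boxes at levels $3,\dots,9$, so $X$ lies on a cycle of $H$ through the box of the vertex $u\in Q$ with $d_G(u,s_1)=20$. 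Yet $S=\{s_1,s_2\}$ has $\diam_G(S)=4$, $d_G(u,S)=20$ and $d_G(b_{12},S)=12>2d+1$, and $S$ separates $u$ from $b_{12}$.

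The idea you are missing is to use $o$ as a hub instead of lifting $H$-paths: it suffices to join each of $u,v$ in $G-S$ either to the other or to $o$. The paper takes $i$ \emph{maximal} with $S\cap A_i\neq\emptyset$, so $S\subseteq A_{i-1}\cup A_i$.
\begin{enumerate}[label=\rm{(\arabic*)}]
\item If $u\in A_j$ with $j\le i$, a geodesic from $u$ to $o$ avoids $S$. Its initial segment in $A_j\cup A_{j-1}$ has length at most $2d+1$, and the rest lies in $\bigcup_{m\le i-2}A_m$.
\item If $j>i$, the paper considers the components $C^i_u\subseteq C^{i-1}_u$ of $G-\bigcup_{m\le i}A_m$ and $G-\bigcup_{m\le i-1}A_m$ containing $u$. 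Distinct boxes of the same level in the same component are more than $2d+2$ apart. Hence at most one box $X$ meets $S$ among those adjacent to $C^i_u$ (if $C^{i-1}_u$ meets $S$), resp.\ to $C^{i-1}_u$ (otherwise).
\item The block hypothesis is used only once, locally, to get an exit into a box $X'\neq X$. A geodesic from $X'$ to $o$ then avoids $S$.
\end{enumerate}
This local step tacitly needs $v\notin X$, which is exactly the configuration of the $d=4$ example. So some extra hypothesis is required there, for instance that $u$ and $v$ each have a neighbour in a box of the common block other than their own. When $X$ is the only exit, such a neighbour of $v\in X$ lies in $C^i_u$, resp.\ $C^{i-1}_u$, and that closes the gap.
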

%%%%%%%%%%%%%%%
\begin{proof}
    We prove that for every such two $u,v$, and for every set $S \subseteq V(G)$ of diameter at most~$d$, if $u,v$ both have distance more than~$2d+1$ from~$S$, then there is a path in~$G$ avoiding $S$ that connects $u$ to either $v$ or $o$. By applying this to both $u$ and $v$, we either obtain a $u$--$v$ path in $G$ avoiding $S$, or we obtain two paths in $G$ avoiding~$S$, one between $u$ and $o$ and one between $v$ and $o$, whose concatenation then yields a $u$--$v$ walk in~$G$ avoiding~$S$. In both cases, $u,v$ are not separated by~$S$. Since $S$ was arbitrary, this shows that $u,v$ are $(d,2d+1)$-inseparable.
    \smallskip

    So let $u,v,S$ be given and assume that $d_G(\{u,v\}, S) \geq 2d+2$.
    Let $\defim{i} \in \N$ be maximal such that $S$ meets the $i$-th annulus~$A_i$. Note that since each annulus has height~$d$, and $S$ has diameter at most~$d$, the set $S$ can meet at most two layers, and these then need to be consecutive. Thus, $S \subseteq A_i \cup A_{i-1}$. Let $\defim{j} \in \N$ be such that $u \in A_j$. We distinguish two cases: 
    \medskip
    
    \noindent \textbf{Case 1}: $j \leq i$.

    Let $P$ be a shortest path in $G$ from $u$ to $o$, and let $p$ be the last vertex of $P$ when moving along $P$ from $u$ to $o$ that is contained in $A_j \cup A_{j-1}$. Further, let $q$ be the next vertex of $P$. Since $P$ is a shortest path from~$u$ to~$o$ and $i \geq j$ by the assumption of Case~1, the subpath~$qPo$ of~$P$ between $q$ and $o$ avoids $A_i \cup A_{i-1}$, and hence it avoids~$S$.
    Moreover, again because $P$ is shortest, and every annulus has height~$d$, the subpath $uPp$ of $P$ between $u$ and $p$ has length at most $2d+1$. 
    Since $u$ has distance at least $2d+2$ from $S$, it follows that $uPp$ avoids $S$, too. Hence $P$ avoids $S$ as desired.
    \medskip

    \noindent \textbf{Case 2}: $j > i$.

    Let \defi{$C^i_u$} be the component of $G- \bigcup_{n \leq i} A_n$ containing $u$, and let \defi{$C^{i-1}_u$} be the component of $G-\bigcup_{n \leq i-1} A_n$ containing $C^i_u \ni u$. (Note that $C^i_u, C^{i-1}_u$ exist because $j > i$ by the assumption of Case~2.) We distinguish two cases:
    \medskip

    \begin{figure}[ht]
        \centering
        \begin{subfigure}[b]{0.45\linewidth}
            \centering
            \includegraphics[width=1\linewidth]{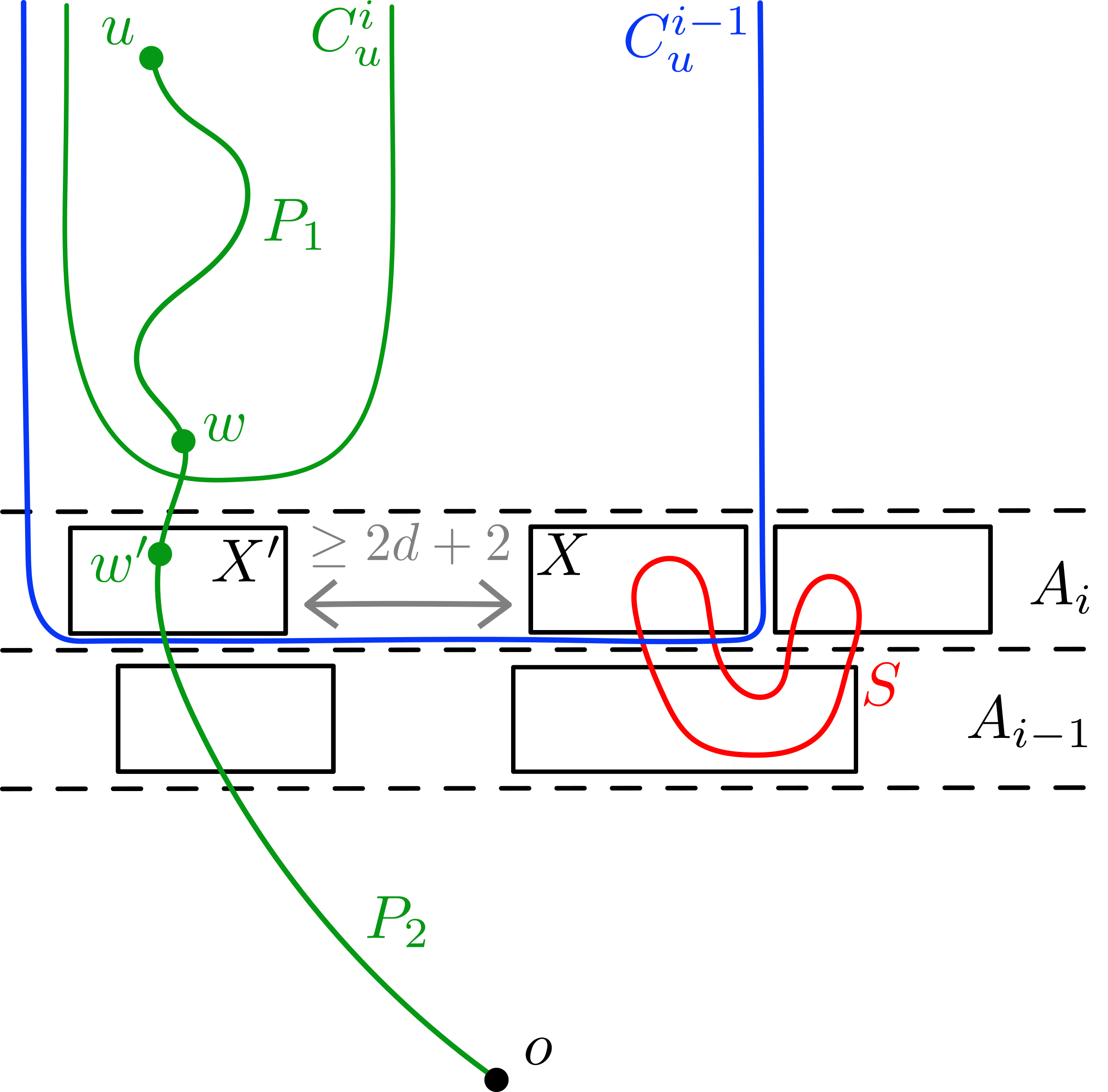}
            \caption{Case 2a.}
            \label{fig:BlocksAreInseparable:Case2a}
        \end{subfigure}
        \hspace{2em}
        \begin{subfigure}[b]{0.45\linewidth}
            \centering
            \includegraphics[width=1\linewidth]{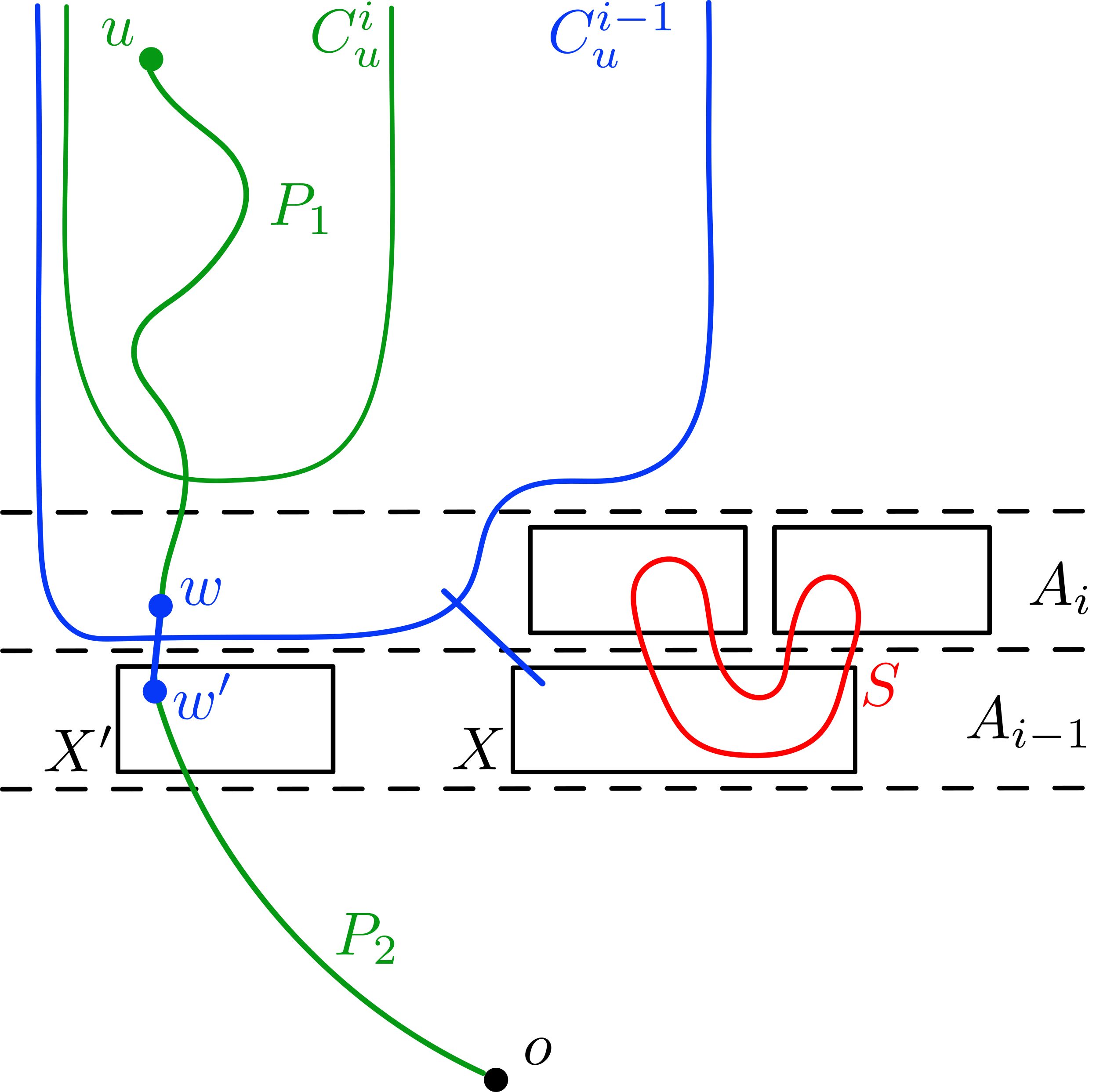}
            \caption{Case 2b}
            \label{fig:BlocksAreInseparable:Case2b}
        \end{subfigure}
        \caption{An illustration of the situation in the proof of \cref{lem:blocks}.}
    \end{figure}

    \noindent \textbf{Case 2a}: \emph{$C^{i-1}_u$ meets $S$.} 

    The reader may look at \cref{fig:BlocksAreInseparable:Case2a} to follow the proof in Case~2a more easily.
    
    By \Cref{constr:HPartition}, $C^{i-1}_u$ is a union of boxes $X \in \cx$. As $C_u^{i-1}$ meets $S$ by the assumption of Case~2a, there is some $\defim{X} \in \cx$ such that $X \subseteq V(C_u^{i-1})$ and $S \cap X \neq \emptyset$. In particular, $X \subseteq A_i$. Recall that $S$ is contained in $A_i \cup A_{i-1}$, and thus $C^{i}_u$ avoids~$S$ and~$X$. 
    If $v$ lies in~$C^i_u$, then there exists a $u$--$v$ path in~$C^i_u$, which hence avoids $S$. 
    Thus, we may assume that $v$ lies outside of $C^{i}_u$. Since $u,v$ lie inside boxes belonging to the same block of~$H$, i.e.\ $h(u), h(v)$ lie in the same block of~$H$, they cannot be separated in~$G$ by removing a single box of~$\cx$. In particular, $C^i_u$ sends edges to at least one box \defi{$X'$} in~$A_i$ other than~$X$, and this box~$X'$ is then also contained in~$C^{i-1}_u$. By \cref{constr:HPartition}, and because $C^{i-1}_u$ witnesses that $X, X'$ are contained in the same component of $G-\bigcup_{n \leq i-1} A_n$, the boxes $X, X'$ are at distance at least~$2d+2$ from each other in~$G$.
    \smallskip

    We now define a $u$--$o$ path in $G$ as follows. Let $ww'$ be an edge of $G$ between $C^i_u$ and~$X'$.
    Let $P_1$ be a path from $u$ to $w$ inside $C^i_u$. Further, let $P_2$ be a shortest $w'$--$o$ path in~$G$. Then $P_1 + ww' + P_2$ is a $u$--$o$ path in $G$. We claim that it avoids $S$. 

    Clearly, $P_1$ avoids $S$ since it is contained in $C^i_{u}$. Therefore, it remains to show that $P_2$ avoids $S$. For this, let us first note that since $S$ has diameter at most~$d$ and meets $X$, it is contained in $B_G(X, d)$. As $X, X'$ are contained in the same component of $G-\bigcup_{n \leq i-1} A_n$, they are at distance at least~$2d+2$ from each other, and hence $B_G(X', d+1)$ avoids $S$. 
    Therefore, the first $2d+1$ vertices of~$P_2$ avoid $S$. As $P_2$ is a shortest path from~$w'$ to~$o$, its remaining vertices are contained in $\bigcup_{n \leq i-2} A_n$, and hence also avoid $S$.
    \medskip

    \noindent \textbf{Case 2b}: \emph{$C^{i-1}_u$ does not meet $S$.}

    The reader may look at \cref{fig:BlocksAreInseparable:Case2b} to follow the proof of Case 2b more easily.

    Let $\defim{X} \in \cx$ be such that $S \cap X \neq \emptyset$ and $C_u^{i-1}$ sends an edge to~$X$ (if such a box exists, otherwise we let $X := \emptyset$). In particular, $X \subseteq A_{i-1}$.

    If $v$ lies in $C_u^{i-1}$, then there exists a $u$--$v$ path in $C_u^{i-1}$, which avoids $S$ by the assumption of Case~2b. Thus, we may assume that $v$ lies outside of~$C_u^{i-1}$. Since $u,v$ lie inside boxes belonging to the same block of~$H$, i.e.\ $h(u), h(v)$ lie in the same block of~$H$, they cannot be separated in~$G$ by removing a single box of~$\cx$. In particular, $C_u^{i-1}$ sends edges to at least one box~\defi{$X'$} in $A_{i-1}$ other than~$X$.

    Then $X'$ avoids $S$. Indeed, if $X=\emptyset$, then $X'$ avoids $S$ by the definition of~$X$. 
    Otherwise, if $X$ is a box in $\cx$, then $C_u^{i-1}$ witnesses that $X, X'$ are contained in the same component of $G - \bigcup_{n \leq i-2} A_n$, and thus $X,X'$ are at distance at least~$2d+2$ from each other in~$G$. Since $S$ has diameter at most~$d$ and meets~$X$, it follows that $S$ avoids~$X'$.
    \smallskip
    
    We now define a $u$--$o$ path in~$G$ as follows. Let $ww'$ be an edge of~$G$ between $C_u^{i-1}$ and~$X'$. Let $P_1$ be a path from~$u$ to~$w$ inside~$C_u^{i-1}$. Further, let $P_2$ be a shortest $w'$--$o$~path in~$G$. Then $P_1 + ww' + P_2$ is a $u$--$o$ path in $G$. We claim that it avoids~$S$.
    
    Clearly, $P_1$ avoids $S$ since it is contained in $C_u^{i-1}$, and $C_u^{i-1}$ avoids $S$ by the assumption of Case~2b. Moreover, as $P_2$ is a shortest path, its first $d$ vertices are contained in~$X'$, and its remaining vertices are contained in $\bigcup_{n \leq i-2} A_n$. As both $X'$ and $\bigcup_{n \leq i-2} A_n$ avoid~$S$, also $P_2$ avoids~$S$.
\end{proof}

Given a block $B$ of $H$ and a node $h \in V(B)$, let 
\[
\defim{ E_{Bh} }:=\left\{v \in X_h \mid vu\in E(G) \text{ for some } u \in \bigcup_{b \in V(B)\setminus\{h\}} X_b \right\}
\]
be the set of vertices of $X_h$ sending an edge to a box of $B-h$ (think of $E_{Bh}$ as the intersection of $B$ with $X_h$). We say that  $E_{Bh}$ is \defi{small}, if $\diam_G(E_{Bh}) \leq 5d+2$, otherwise we call it \defi{wide}. We call a cutvertex $h$ of $H$ \defi{small}, if $E_{Bh}$ is small for every block $B$ containing $h$. 

Given two blocks $B,B'$ of $H$ that intersect at a cutvertex $h$ of $H$, we say that $h$ \defi{identifies} $B,B'$, if both $E_{Bh}, E_{B'h}$ are wide. A \defi{block-class} is an equivalence class of blocks \wrt\ the reflexive transitive closure of the relation of being identified. 

We now define the desired \td\ $(T, \cv)$ of~$G$. 
\medskip

\noindent\textbf{Definition of $T$:} Let \defi{$T'$} be the block-cutvertex tree of $H$.
Let \defi{$T$} be the tree obtained from $T'$ by contracting each edge $Bh\in E(T')$ between a block~$B$ and a cutvertex~$h$ \st\ $E_{Bh}$ is wide. Note that each block-class $C$ of $H$ corresponds to exactly one vertex~\defi{$t_C$} of $T$, and distinct block-classes $C \neq C'$ correspond to distinct vertices $t_C \neq t_{C'}$. It is straightforward to check that 
\labtequ{T vces}{the vertices of $T$ that are not of the form $v_C$, for some block-class~$C$ of~$H$, are precisely the small cutvertices of $H$.} 
This completes the definition of~$T$.
\medskip

\noindent\textbf{Definition of the bags $V_t$:}
For every block~$B$ of~$H$, and every $h\in V(B)$, we define the \defim{$B$-part} of~$h$ to be $E_{Bh}$ if $h$ is a cutvertex of~$H$ and $E_{Bh}$ is small, and define the \defim{$B$-part} of~$h$ to be $X_h$ otherwise. (In both cases, the $B$-part of~$h$ is a subset of~$X_h$.)

We have two types of bags, corresponding to the two types of vertices of $T$:
\begin{enumerate}[label=\rm{(\Alph*)}]
    \item \label{itm:Def:Prebag:Cutv} For every small cutvertex $h$ of $H$, we define its bag by $\defim{V_h}:=X_h$. 
    
    \item \label{itm:Def:Prebag:Block} For every block-class~$C$ of~$H$, we define the bag \defi{$V_{t_C}$} of $t_C$ to consist of the union of all $B$-parts over all blocks $B\in C$. 
\end{enumerate}
Note that if $h$ is not a small cutvertex, then there exists some block~$B$ of~$H$ such that $h \in B$ and such that either $h$ is not a cutvertex, or $E_{Bh}$ is wide. In any case, the $B$-part of $h$ equals $X_h$, and thus $X_h$ is contained in the bag of~$C$ where $C$ is the block-class containing~$B$.
This completes the definition of the bags~$V_t$.
\medskip

We now show that the bags~$V_t$ define a \td\ $(T,\cv)$ of~$G$ over~$T$. Indeed, \ref{TDi} is satisfied as the boxes~$X_h$ form a partition of~$V(G)$, and because each box~$X_h$ is contained in some bag: of type~\ref{itm:Def:Prebag:Cutv} if $h$ is a small cutvertex of~$H$, and of type~\ref{itm:Def:Prebag:Block} otherwise. 
To check \ref{TDii}, note first that if $e=uv \in E(G)$ lies in a box~$X_h$, then it is contained in a bag by the previous argument. If, on the other hand, $X(u) \neq X(v)$, then the bag corresponding to the block-class of the block~$B$ of~$H$ containing $e$ contains the $B$-parts of both $X(u), X(v)$, and therefore both $u,v$. 

To check \ref{TDiii}, let $s,t \in V(T)$. If $s\neq t$ are small cutvertices of~$H$, then their bags are of type~\ref{itm:Def:Prebag:Cutv}, and therefore disjoint as $\cx$ is a partition. So we may assume that $s=t_C$ for a block-class~$C$ of~$H$. Then $V_{t_C}$ is of type~\ref{itm:Def:Prebag:Block}, and hence $V_{t_C} \subseteq \bigcup_{b \in \bigcup C} X_b$. Therefore, $V_{t_C}$ and $V_t$ can only intersect in the box $X_h$ of a cutvertex~$h$ of~$H$ such that $h \in \bigcup C$ and either $t=h$ or $h \in \bigcup C'$ such that $t = t_{C'}$. In particular, $t_C$ and $t$ are adjacent in~$T$. This completes the verification of~\ref{TDiii}.
\medskip

Next, we show that $(T, \cv)$ satisfies \ref{itm:TD i}. More precisely, 
\labtequ{diam adh}{every adhesion set $V_e$ of $(T, \cv)$ satisfies $\diam_G(V_e) \leq 5d+2$.} 
By \eqref{T vces} either $e$ connects two vertices corresponding to block classes $C\neq C'$ separated by a cutvertex $h$ of $H$ that does not identify $C$ with $C'$, or $e$ connects a  cutvertex $h$ of $H$ with a block class $C\ni B$ with $E_{Bh}$ small. If the former is the case, then one of $C,C'$ contains a block $B\ni h$ with $E_{Bh}$ small, and the other contains a block $B'\ni h$ with $E_{B'h}$ wide. In this case we have $V_e=E_{Bh}$. If the latter is the case, then again $V_e=E_{Bh}$. Thus, in both cases \eqref{diam adh} is confirmed as $E_{Bh}$ is small, and hence $\diam_G(E_{Bh}) \leq 5d+2$.
\medskip

We now show that $(T, \cp)$ satisfies \ref{itm:CoarseBlockButTree1:ii}, i.e.\ every bag $V_t$ is $(d,2d+1)$-inseparable. If $V_t$ is of type~\ref{itm:Def:Prebag:Cutv}, then this follows from \cref{lem:blocks}, and if $V_t$ is of type~\ref{itm:Def:Prebag:Block}, then this follows from the following lemma: 

\begin{lemma} \label{lem:block-classes}
    Every bag $V_{t_C}$ of type~\ref{itm:Def:Prebag:Block} is $(d,2d+1)$-inseparable in $G$.
\end{lemma}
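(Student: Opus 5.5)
Fix a set $S\subseteq V(G)$ with $\diam_G(S)\le d$ and two vertices $u,v\in V_{t_C}$ with $d_G(\{u,v\},S)\ge 2d+2$. The goal is to connect $u$ and $v$ in $G-S$. By definition of the bag, $u$ lies in the $B$-part of some $h\in V(B)$ and $v$ lies in the $B'$-part of some $h'\in V(B')$, with $B,B'\in C$. Since $C$ is a block-class, there is a chain of blocks $B=B_0,B_1,\dots,B_k=B'$ in $C$ in which consecutive blocks $B_{i-1},B_i$ are identified by a cutvertex $h_i$. Identification means that both $E_{B_{i-1}h_i}$ and $E_{B_ih_i}$ are wide.

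The plan is to walk along this chain using \cref{lem:blocks}. That lemma says any two vertices in boxes of a common block are $(d,2d+1)$-inseparable. So it is enough to find, for each $i$, a vertex $w_i\in X_{h_i}$ with $d_G(w_i,S)\ge 2d+2$. Once these exist, set $w_0:=u$ and $w_{k+1}:=v$. For each $i\le k$, the vertices $w_i$ and $w_{i+1}$ lie in boxes of the common block $B_i$, because $h_i,h_{i+1}\in V(B_i)$. Both are at distance more than $2d+1$ from $S$, so \cref{lem:blocks} gives a $w_i$--$w_{i+1}$ path in $G-S$. Concatenating these paths gives a $u$--$v$ walk in $G-S$.

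The remaining step is to show the witnesses $w_i$ exist, and this is where wideness, i.e.\ the threshold $5d+2$, is used. I expect it to be the main obstacle. Since $E_{B_ih_i}\subseteq X_{h_i}$ is wide, it has two vertices $x,y$ with $d_G(x,y)\ge 5d+3$. Suppose both satisfy $d_G(x,S),d_G(y,S)\le 2d+1$. Because $\diam_G(S)\le d$, the triangle inequality gives
\[
d_G(x,y)\le (2d+1)+d+(2d+1)=5d+2,
\]
a contradiction. Hence one of $x,y$ is at distance at least $2d+2$ from $S$, and we take it as $w_i$.

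Two details need checking. First, $u$ may lie only in the $B$-part $E_{Bh}$ rather than in all of $X_h$; this is harmless, since $u\in X_h$ and $h\in V(B)$, which is all \cref{lem:blocks} requires. Second, when $k=0$, \cref{lem:blocks} applies directly to $u$ and $v$.
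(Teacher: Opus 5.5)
Your proof is correct and follows essentially the same route as the paper. The paper also picks, in the wide sets at the cutvertices linking the blocks of $C$, vertices at distance at least $2d+2$ from $S$ (possible since $5d+3 > (2d+1)+d+(2d+1)$), and chains consecutive pairs together with \cref{lem:blocks}. The differences are cosmetic and slightly in your favour: you take the chain of identified blocks directly from the definition of a block-class rather than reading it off an $h(u)$--$h(v)$ path in $H$ (whose intermediate cutvertices need not all be identifying unless that path is chosen with care), and you use one witness per cutvertex instead of two, which suffices because \cref{lem:blocks} applies to any two boxes of a common block.
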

%%%%%%%%%%%%%%%
\begin{proof} 
    Let $u,v \in V_{v_C}$, and $S\subseteq V(G)$ with $\diam_G(S)\leq d$ \st\ $d(\{u,v\},S) > 2d+1$. We need to show that $G-S$ contains a \pth{u}{v}.

    Let $P$ be a \pth{h(u)}{h(v)}\ in~$H$. Let $h_1, \ldots, h_k$ be the cutvertices of $H$ contained in $P$ other than $h(u),h(v)$ (if no such $h_i$ exists then $P$, and therefore $h(u),h(v)$, is contained in a block of $H$, and we are done by \Cref{lem:blocks}).
    Let $B_i^-,B_i^+$ be the blocks of $H$ containing the two edges of $P$ incident with $h_i$, and note that both $E_{B^-_i h_i}, E_{B^+_i h_i}$ are wide since $B^-_i,B^+_i$ lie in the block-class~$C$. Pick vertices $v^-_i,v^+_i$ of \G\ in $E_{B^-_i h_i}, E_{B^+_i h_i}$ respectively, so that both $v^\pm_i$ have distance at least $2d+2$ from~$S$, which is possible since $\diam_G(E_{B^\pm_i h_i}) \geq 5d+3$ and $\diam_G(S) \leq d$. 

    The resulting sequence of vertices $y v_1^- v_1^+v^-_2 \ldots v_k^+ z$ is at distance more than~$2d+1$ from~$S$, and any two consecutive members lie in a common block, and can therefore be connected by a path in~$G$ that avoids $S$ by \Cref{lem:blocks}.
\end{proof}

This completes the proof of \cref{main:CoarseBlockCutTree1}.
\bigskip

We now prove \cref{thm:cbct}. For this, we essentially take the same \td\ $(T, \cv)$ as in \cref{main:CoarseBlockCutTree1}, but we expand its bags $V_t$ slightly. We then show that the arising \td\ $(T, \cv')$ is already as desired for \cref{thm:cbct}.
\medskip

\noindent\textbf{Definition of the bags $V'_t$:} For each node~$t$ of~$T$, we extend its bag $V_t$ by $5d+3$ to obtain the bag $V'_t$, i.e.\ $\defim{V'_t}:= V_t^{+5d+3} = B_G(V_t, 5d+3)$.
\smallskip

By \cref{lem:TD exp}, $(T,\cv')$ is still a \TD\ of $G$. It follows immediately from~\eqref{diam adh} that each adhesion set $V'_e$ of $(T,\cv')$ has diameter less than $3\cdot(5d+3) = 15d+9$ (in~$G[V'_e]$), which proves \ref{itm:TD i}. 
The second statement of \cref{lem:TD exp} yields \ref{itm:TD ii}.

It remains to prove \ref{itm:TD iii}, i.e.\ every outer-torso $O'_t$ of a bag $V'_t$ is an $(d,7d+4)$-inseparable graph.
For this, we first show that every bag~$V'_t$ is $(d, 7d+4)$-inseparable.
Indeed, by \Cref{lem:blocks,lem:block-classes}, every (old) bag~$V_t$ is $(d,2d+1)$-inseparable in \G. Let $u',v' \in V'_t$ and $S \subseteq V(G)$ with $\diam_G(S) \leq d$ such that $d_G(\{u',v'\},S) > 7d+4$. Then there are $u,v \in V_t$ such that $d_G(u,u'), d_G(v,v') \leq 5d+3$. Since $V_t$ is $(d,2d+1)$-inseparable in $G$, either at least one of $u,v$, say $u$, is at distance at most~$2d+1$ from~$S$, or there is a $u$--$v$~path $P$ in $G$ that avoids $S$. In the former case, $d_G(u',S) \leq (2d+1) + (5d+3) = 7d+4$, a contradiction.
In the latter case, we can extend $P$ to a $u'$--$v'$~path~$P'$ by adding shortest $u'$--$u$ and $v'$--$v$ paths $P_u, P_v$. As $P_u,P_v$ have length at most $5d+3$, and $d_G(\{u',v'\},S) > 7d+4$, the path~$P'$ avoids~$S$.

Now to see that $O'_t$ is $(d, 7d+4)$-inseparable, let $u,v \in V(O'_t)$ and $S \subseteq V(O'_t)$ with $\diam_{O'_t}(S) \leq d$, and assume that $d_{O'_t}(\{u,v\}, S) > 7d+4$. If $S$ avoids  the set $B_G(V_t, 3d+3) \subseteq V'_t \subseteq V(O'_t)$, then $O'_t$ contains a $u$--$v$ path that avoids $S$: Since each adhesion set of $(T, \cv)$ has diameter at most $5d+2$ in~$G$, it follows that $O'_t[B_G(V_t, 3d+3)] \supseteq G\left[B_G\left(V_t, \left\lceil\frac{5d+2}{2}\right\rceil\right)\right]$ is connected. Thus, $O'_t[B_G(V_t, 3d+3)]$ contains a $u$--$v$ path, which avoids $S$ as $S$ avoids $B_G(V_t, 3d+3)$. 

Hence, we may assume that $S$ meets $B_G(V_t, 3d+3)$. Then, as $V'_t = B_G(V_t,5d+3)$, we have $d_G(S, G-V'_t) \geq d$, and therefore $\diam_G(S) \leq d$. Since $O'_t$ is a contraction minor of~$G$, we also have $d_G(\{u,v\},S) > 7d+4$. Thus, as $V'_t$ is $(d,7d+4)$-inseparable in~$G$ as shown above, it follows that there is a $u$--$v$ path $P$ in $G$ that avoids $S$. Since $S \subseteq B_G(V_t, 4d+3) \subseteq V'_t$, it follows that the path $P'$ in $O'_t$ induced by~$P$ still avoids $S$ (as $S$ avoids all contraction vertices of~$O'_t$).

This completes the proof of \cref{thm:cbct}.

\section*{Acknowledgments}

We thank Chiara Molinari for helpful discussions.

\bibliographystyle{plain}
\bibliography{collective}

\begin{thebibliography}{10}

\bibitem{ADEFJKW}
S.~Albrechtsen, R.~Diestel, A.-K. Elm, E.~Fluck, R.~W. Jacobs, P.~Knappe, and P.~Wollan.
\newblock A structural duality for path-decompositions into parts of small radius.
\newblock {\em Innovations in Graph Theory}, 3:207--246, 2026.

\bibitem{ADGK2t}
S.~Albrechtsen, M.~Distel, and A.~Georgakopoulos.
\newblock {Excluding $K_{2,t}$ as a fat minor}.
\newblock arXiv:2510.14644.

\bibitem{ADGSmallCounterexamples}
S.~Albrechtsen, M.~Distel, and A.~Georgakopoulos.
\newblock Small counterexamples to the fat minor conjecture.
\newblock arXiv:2601.05761.

\bibitem{AHJKW}
S.~Albrechtsen, T.~Huynh, R.~W. Jacobs, P.~Knappe, and P.~Wollan.
\newblock {A Menger-Type Theorem for Two Induced Paths}.
\newblock {\em SIAM J.\ Discrete Math.}, 38(2):1438--1450, 2024.

\bibitem{AJKW}
S.~Albrechtsen, R.~Jacobs, P.~Knappe, and P.~Wollan.
\newblock A characterisation of graphs quasi-isometric to ${K}_4$-minor-free graphs.
\newblock {\em Combinatorica}, 45:61, 2025.

\bibitem{BBCPPCoarseBlockCutTree}
J.~Balig\'acs, V.~Bla{\v{z}}ej, J.~Czy{\.{z}}ewska, Mi. Pilipczuk, and E.~Protopapas.
\newblock A coarse block-cut tree theorem.
\newblock preprint, 2026.

\bibitem{BLPPSep}
E.~Bonnet, H.~Le, Ma. Pilipczuk, and Mi. Pilipczuk.
\newblock {Coarse Balanced Separators in Fat-Minor-Free Graphs}.
\newblock arXiv:2604.11318.

\bibitem{CDNRV}
V.~Chepoi, F.~F. Dragan, I.~Newman, Y.~Rabinovich, and Y.~Vax\`es.
\newblock Constant {Approximation} {Algorithms} for {Embedding} {Graph} {Metrics} into {Trees} and {Outerplanar} {Graphs}.
\newblock {\em Discrete \& Computational Geometry}, 47(1):187--214, 2012.

\bibitem{DGHLMGallai}
M.~Distel, U.~Giocanti, J.~Hodor, C.~Legrand-Duchesne, and P.~Micek.
\newblock {A coarse Gallai theorem}.
\newblock arXiv:2601.18439.

\bibitem{FujPapCoa}
K.~Fujiwara and P.~Papasoglu.
\newblock A coarse-geometry characterization of cacti.
\newblock {arXiv:2305.08512}.

\bibitem{FujPapAsy}
K.~Fujiwara and P.~Papasoglu.
\newblock Asymptotic dimension of planes and planar graphs.
\newblock {\em Trans.\ Am.\ Math.\ Soc.}, 374:8887--8901, 2021.

\bibitem{GeoMolQua}
A.~Georgakopoulos and C.~Molinari.
\newblock {Quasi-isometries, contractions, and intersection graphs}.
\newblock {In preparation}.

\bibitem{GeoPapMin}
A.~Georgakopoulos and P.~Papasoglu.
\newblock {Graph minors and metric spaces}.
\newblock {\em Combinatorica}, 45:33, 2025.

\bibitem{LiuWeakCoarseMenger}
C.-H. Liu.
\newblock Coarse menger property of quasi-minor excluded graphs and length spaces.
\newblock arXiv:2605.10068.

\bibitem{McMFat}
J.~MacManus.
\newblock Fat minors in finitely presented groups.
\newblock {\em Combinatorica}, 45:40, 2025.

\bibitem{NgScSeCoa}
T.~Nguyen, A.~Scott, and P.~Seymour.
\newblock Asymptotic structure. {I}. {C}oarse tree-width.
\newblock arXiv:2501.09839.

\bibitem{NgScSeAsyIV}
T.~Nguyen, A.~Scott, and P.~Seymour.
\newblock {Asymptotic structure.\ IV.\ A counterexample to the weak coarse Menger conjecture}.
\newblock arXiv:2508.14332.

\bibitem{Tutte}
W.T. Tutte.
\newblock {\em {Connectivity in graphs.}}
\newblock {Mathematical Expositions. 15. Toronto: University of Toronto Press; London: Oxford University Press. IX, 145 p.}, 1966.

\end{thebibliography}

\end{document}